\documentclass[a4paper]{article}

\usepackage[14pt]{extsizes}
\usepackage[left=25mm, top=25mm, right=15mm, bottom=25mm, nohead, footskip=10mm]{geometry}
\usepackage[utf8]{inputenc}
\usepackage[english]{babel}
\usepackage{amsfonts,amsmath,amsthm,amssymb}
\usepackage{cmap}
\usepackage[usenames]{color}
\usepackage{enumitem}

\newtheorem{theorem}{Theorem}[section]
\newtheorem{remark}{Remark}[section]
\newtheorem{lemma}{Lemma}[section]
\newtheorem{corollary}{Corollary}[section]
\newtheorem{example}{Example}[section]

\newcommand\R{\mathbb R}
\newcommand\N{\mathbb N}
\newcommand\E{\mathbb E}

\newcommand\F{\mathcal F}
\newcommand\B{\mathcal B}

\newcommand\const{\mathrm{const} \, }
\newcommand\T{\mathrm{T}}
\newcommand\radial{_\mathrm{rad}}
\newcommand\tangential{_\mathrm{tan}}
\newcommand\sgn{\mathrm{sgn} \, }
\newcommand\C{\mathrm{C}}

\renewcommand\d{\mathrm d}

\renewcommand\P{\mathbb P}


\begin{document}
	
\title{On Asymptotic Behavior of Stochastic Differential Equation Solutions in Multidimensional Space}
\author{Viktor Yuskovych}
\maketitle
	
\begin{abstract}
Consider the multidimensional SDE
$$\d X(t) = a(X(t))\d t + b(X(t))\d W(t).$$
We study the asymptotic behavior of its solution $X(t)$ as $t \to \infty$, namely, we study sufficient conditions of transience of its solution $X(t)$, stabilization of its multidimensional angle $X(t)/|X(t)|$, and asymptotic equivalence of solutions of the given SDE and the following ODE without noise:
$$\d x(t) = a(x(t))\d t.$$
\end{abstract}	
	
\section{Introduction}

Usually, there are two modes of behavior of SDE solutions as $t \to \infty$: transience and recurrence. In this article, we study transience of solutions.

Consider a one-dimensional SDE of the form
\begin{equation}\label{sde}
	\d X(t) = a(X(t))\d t + b(X(t))\d W(t).
\end{equation}
Gikhman and Skorokhod (see § 16, 17 of Part I in \cite{gichman}) were the first who started studying its non-random asymptotics, i.e., a function $x(t)$ such that $X(t) \sim x(t), \ t \to \infty,$ a.s. Later this problem was studied by Keller, Kersting, and Rösler \cite{keller}. Buldygin, Indlekofer, Klesov, Stainebach, and Tymoshenko (see \cite{buldygin1}, \cite{buldygin2}) considered some types of non-autonomous SDEs and studied asymptotic behavior of their solutions; in particular, they considered the problem of asymptotic equivalence of SDE and ODE solutions. Pilipenko, Proske, and Pavlyukevich (see \cite{pilipenko1}, \cite{pilipenko2}) considered SDEs with a non-Gaussian noise.

Unlike the one-dimensional case, asymptotic behavior of the multidimensional SDE solution differs even provided its transience. Friedman \cite{friedman} and Khasminskii \cite{khasminskii} studied conditions of transience and recurrence for systems of linear SDEs. Friedman also studied the behavior of the polar angle of the two-dimensional SDE solution (see § 12.7 in \cite{friedman}). Spitzer \cite{spitzer} studied the limit distribution (as $t \to \infty$) of the polar angle of the Wiener process on the plane. Samoilenko, Stanzhitskii, Novak \cite{samoilenko} and Pilipenko, Proske \cite{pilipenko3} studied transience of solutions to multidimensional SDEs.

Consider an $n$-dimensional ($n \geqslant 2$) autonomous SDE of the form
\begin{equation}\label{sde}
	\d X(t) = a(X(t))\d t + b(X(t))\d W(t), \ X(0) = x_0 \in \R^n,
\end{equation}
where $a\colon \R^n \to \R^n$, $b\colon \R^n \to \R^{n \times m}$, and $W$ is an  $m$-dimensional Wiener process

In this article, we study asymptotic behavior of solutions $X(t)$ of the SDE (\ref{sde}) as $t \to \infty$. Namely, we search for sufficient conditions such that:
\begin{itemize}
	\item the solution $X(t)$ is transient, i.e., almost surely
	$$|X(t)| \to \infty, \ t \to \infty;$$
	\item the angle of the solution's growth stabilizes, i.e., there exists a random variable $\Phi_\infty$ ({\it the limit angle}) such that the limit
	$$\lim_{t \to \infty} \frac{X(t)}{|X(t)|} =: \Phi_\infty$$
	exists almost surely;
	\item there exists a non-random function of two variables $r_\varphi(t), \ \varphi \in \R^n, \ t \geqslant 0$, that describes the asymptotics of $|X(t)|$, i.e., almost surely
	$$|X(t)| \sim r_{\Phi_\infty}(t), \ t \to \infty,$$
	where $\Phi_\infty$ is the limit angle.
\end{itemize}

For convenience, denote $|X(t)| =: R(t)$ and $X(t)/|X(t)| =: \Phi(t)$. We will call $R(t)$ the {\it radius} process and $\Phi(t)$ the {\it angle} process.

It is known that if the diffusion is non-degenerate ($\det b^\T b > 0$) then the solution of a multidimensional ($n \geqslant 2$) SDE starting at $x_0 \neq 0$ never hits zero with probability 1. Without loss of generality, we will assume that $X(0)~ =~x_0~\neq~0$. Applying Itô formula to the radius and angle processes, we get 
\begin{eqnarray}
	\label{sde_radius}\d R(t) = \mu(R(t), \Phi(t))\d t + \boldsymbol\sigma(R(t), \Phi(t))\d W(t), \ R(0) = r_0 = |x_0| > 0, \\
	\label{sde_angle}\d\Phi(t) = \nu(R(t), \Phi(t))\d t + \chi(R(t), \Phi(t))\d W(t), \ \Phi(0) = \varphi_0 = \frac{x_0}{|x_0|},
\end{eqnarray}
where $\mu\colon \R\times\R^n \to \R$, $\boldsymbol\sigma\colon \R\times\R^n \to \R^m$, $\nu\colon \R\times\R^n \to \R^n$, $\chi\colon \R\times\R^n \to \R^m$ are some functions.

Let's write down formulae for coefficients $\mu$, $\boldsymbol\sigma$, $\nu$, $\chi$ of equations (\ref{sde_radius})-(\ref{sde_angle}) in terms of coefficients $a$, $b$ of the initial SDE (\ref{sde}). For this, define the {\it radial} and {\it tangential} components of the vector field $a$ at the point $x \neq 0$ by
$$a\radial(x) := \frac{xx^\T}{|x|^2}a(x), \qquad a\tangential(x) := a(x) - a\radial(x),$$
respectively. Similarly, define the radial and tangential components of the matrix field $b$ at the point $x \neq 0$ by
$$b\radial(x) := \frac{xx^\T}{|x|^2}b(x), \qquad b\tangential(x) := b(x) - b\radial(x),$$
respectively. Then
$$\mu(r, \varphi) = \varphi^\T a\radial(r\varphi) + \frac{|b\tangential(r\varphi)|^2}{2r}, \qquad \boldsymbol\sigma(r, \varphi) = \varphi^\T b\radial(r\varphi),$$
$$\nu(r, \varphi) = \frac{a\tangential(r\varphi)}r-\frac{\left(2(b(r\varphi)b^\T(r\varphi))\tangential\right) + |b\tangential(r\varphi)|^2)\varphi}{2r^2},$$
$$\chi(r, \varphi) = \frac{b\tangential(r\varphi)}{r}.$$

Using the Lévy theorem (see § 7 of Chapter II in \cite{ikeda}), one can find a one-dimensional Wiener process $W^{(1)}$ and a function $\sigma\colon \R\times\R^n \to \R$ such that
$$\boldsymbol\sigma(R(t), \Phi(t))\d W(t) = \sigma(R(t), \Phi(t))\d W^{(1)}(t).$$

Hence, the SDE for the radius process can be written as follows:
$$\d R(t) = \mu(R(t), \Phi(t))\d t + \sigma(R(t), \Phi(t))\d W^{(1)}(t).$$

Further, we focus on studying the system of SDEs 
\begin{eqnarray}
	\label{sde_radius_new} \d R(t) = \mu(R(t), \Phi(t))\d t + \sigma(R(t), \Phi(t))\d W^{(1)}(t), \ R(0) = r_0, \\
	\label{sde_angle_new} \d\Phi(t) = \nu(R(t), \Phi(t))\d t + \chi(R(t), \Phi(t))\d W(t), \ \Phi(0) = \varphi_0
\end{eqnarray}
considering coefficients $\mu, \sigma, \nu, \chi$ to be arbitrary (not related to the coefficients $a, b$ of the initial SDE (\ref{sde})). Nevertheless, we will keep calling the processes $R$ and $\Phi$ the radius and the angle, respectively. Results about $R(t)$ and $\Phi(t)$ obtained below will describe the asymptotic behavior of the solution $X(t)$ to the SDE~(\ref{sde}).

Notice that the problems stated previously now can be reformulated in terms of the radius and the angle processes; namely, we search for sufficient conditions such that the following hold almost surely:
\begin{itemize}
	\item $R(t) \to \infty, \ t \to \infty$;
	\item $\exists \lim_{t \to \infty} \Phi(t) =: \Phi_\infty$;
	\item $\exists r_\varphi(t)\colon R(t) \sim r_{\Phi_\infty}(t), \ t \to \infty$.
\end{itemize}

This article has the following structure. In Section 2, we prove a general theorem about the asymptotic equivalence of SDE and ODE solutions in the one-dimensional non-autonomous case. In Section 3, we state sufficient conditions that guarantee the transience of the radius. In Section 4, we prove a theorem about angle stabilization. In Section 5, we prove the main result about radius asymptotics. Appendix contains some auxiliary results.

\section{Asymptotic Behavior of One-Dimensional SDEs}

Let $(\Omega, \F, (\F_t)_{t \geq 0}, \P)$ be a filtered probability space, $B$ be a one-dimensional $(\F_t)_t$-adapted Wiener process.

Let's prove the next lemma about asymptotics of Itô integrals.

\begin{lemma}\label{lemma_integral}
	Let $b = b(t, \omega)$ be a $(\F_t)_t$-adapted stochastic process and $C > 0$, $\beta > 0$ be constants such that
	$$\E b^2(t) \leq C(1 + t^{2\beta}), \ t \geq 0.$$
	Then for any $\gamma > \beta + \frac12$, almost surely
	$$\frac1{t^\gamma} \int_0^t b(s) \d B(s) \to 0, \ t \to \infty.$$
\end{lemma}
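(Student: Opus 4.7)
\medskip

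\noindent\textbf{Proof plan.} Write $M(t) := \int_0^t b(s)\,\d B(s)$. The $L^2$ bound on $b$ guarantees that $\E\int_0^t b^2(s)\,\d s < \infty$ for every $t$, so $M$ is a genuine (not merely local) square-integrable martingale with
$$\E M^2(t) = \int_0^t \E b^2(s)\,\d s \leq \int_0^t C(1 + s^{2\beta})\,\d s \leq C_1(1 + t^{2\beta+1}),$$
for some constant $C_1$ depending only on $C$ and $\beta$. The strategy is the standard dyadic localization: control $\sup_{s \leq T}|M(s)|$ on the windows $T = 2^{n+1}$ via Doob's $L^2$ maximal inequality, then fill in the gaps between dyadic times.

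\medskip

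\noindent For $\varepsilon > 0$ and $n \in \N$, Doob's inequality gives
$$\P\!\left(\sup_{0 \leq s \leq 2^{n+1}} |M(s)| > \varepsilon\, 2^{n\gamma}\right) \leq \frac{4\,\E M^2(2^{n+1})}{\varepsilon^2\, 2^{2n\gamma}} \leq \frac{4 C_1\bigl(1 + 2^{(n+1)(2\beta+1)}\bigr)}{\varepsilon^2\, 2^{2n\gamma}}.$$
The right-hand side is a constant multiple of $2^{n(2\beta+1-2\gamma)}$ plus a term decaying as $2^{-2n\gamma}$. Since $\gamma > \beta + \tfrac12$, the exponent $2\beta+1-2\gamma$ is strictly negative, so these probabilities are summable in $n$. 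The Borel--Cantelli lemma then yields a full-measure event on which
$$\sup_{0 \leq s \leq 2^{n+1}} |M(s)| \leq \varepsilon\, 2^{n\gamma} \quad \text{for all sufficiently large } n.$$

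\medskip

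\noindent On that event, for any $t \geq 2^{n_0}$ pick the unique $n \geq n_0$ with $t \in [2^n, 2^{n+1}]$; then
$$\frac{|M(t)|}{t^\gamma} \leq \frac{\sup_{s \leq 2^{n+1}} |M(s)|}{2^{n\gamma}} \leq \varepsilon.$$
Hence $\limsup_{t \to \infty} t^{-\gamma}|M(t)| \leq \varepsilon$ almost surely. Intersecting over a countable sequence $\varepsilon_k \downarrow 0$ gives the claimed limit with probability one.

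\medskip

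\noindent\textbf{Main obstacle.} There is no serious obstacle; the one point requiring care is choosing the dyadic scaling correctly so that the exponent $2\beta+1-2\gamma$ comes out negative --- this is precisely why the hypothesis $\gamma > \beta + \tfrac12$ is sharp for this method. A mild subtlety is verifying that $M$ is an honest $L^2$-martingale (so Doob applies), which is immediate from the integrated form of the hypothesis on $\E b^2$.
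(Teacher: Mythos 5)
Your proof is correct and follows essentially the same route as the paper: Doob's $L^2$ maximal inequality on dyadic windows $[2^n,2^{n+1}]$, the It\^o isometry to bound $\E M^2(2^{n+1})$, and the observation that $\gamma>\beta+\tfrac12$ makes the resulting bounds summable; your use of Borel--Cantelli is just a repackaging of the paper's direct tail-sum estimate of $\P\{\sup_{t\geq 2^n} t^{-\gamma}|M(t)|\geq\varepsilon\}$.
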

\begin{proof}
	Let $\varepsilon > 0$, $k \in \N_0$. Then using Doob's martingale inequality, the Itô isometry, and Fubini's theorem, we have
	$$\P\left\{\sup_{2^k \leq t \leq 2^{k + 1}} \frac1{t^\gamma}\left|\int_0^t b(s) \d B(s)\right| \geq \varepsilon\right\} \leq$$
	$$\leq \P\left\{\sup_{2^k \leq t \leq 2^{k + 1}} \frac1{(2^k)^\gamma}\left|\int_0^t b(s) \d B(s)\right| \geq \varepsilon\right\} \leq$$
	$$\leq \P\left\{\sup_{0 \leq t \leq 2^{k + 1}} \left|\int_0^t b(s) \d B(s)\right| \geq \varepsilon2^{\gamma k}\right\} \leq$$
	$$\leq \frac1{(\varepsilon 2^{\gamma k})^2}\E\left(\int_0^{2^{k+1}}b(s)\d B(s)\right)^2 = \frac1{\varepsilon^2 2^{2\gamma k}}\E\int_0^{2^{k+1}}b^2(s)\d s =$$
	$$= \frac1{\varepsilon^2 2^{2\gamma k}}\int_0^{2^{k+1}}\E b^2(s)\d s \leq \frac1{\varepsilon^2 2^{2\gamma k}}\int_0^{2^{k+1}}C(1 + s^{2\beta})\d s =$$
	$$= \frac1{\varepsilon^2 2^{2\gamma k}}\left(2^{k + 1} + \frac{(2^{k+1})^{2\beta + 1}}{2\beta + 1}\right) =$$
	$$= \frac{2C}{\varepsilon^2}2^{(1 - 2\gamma)k} + \frac{2^{2\beta + 1}C}{(2\beta + 1)\varepsilon^2}2^{(2\beta-2\gamma+1)k}.$$
	
	Hence, for any $n \in \N$, 
	$$\P\left\{\limsup_{t \to \infty}\frac1{t^\gamma}\left|\int_0^t b(s)\d B(s)\right| \geq \varepsilon\right\} \leq \P\left\{\sup_{t \geq 2^n} \frac1{t^\gamma}\left|\int_0^t b(s)\d B(s)\right| \geq \varepsilon\right\} \leq$$
	$$\leq \sum_{k = n}^\infty \P\left\{\sup_{2^k \leq t \leq 2^{k + 1}} \frac1{t^\gamma}\left|\int_0^t b(s) \d B(s)\right| \geq \varepsilon\right\} \leq$$
	$$\leq \frac{2C}{\varepsilon^2}\sum_{k = n}^\infty 2^{(1 - 2\gamma)k} + \frac{2^{2\beta + 1}C}{(2\beta + 1)\varepsilon^2}\sum_{k = n}^\infty2^{(2\beta-2\gamma+1)k}.$$
	Last two series converge to 0 as $n \to \infty$ (since $2\gamma-2\beta-1 > 0$ by condition) so the right-hand side converges to $0$ as $n \to \infty$.
	
	Since $\varepsilon > 0$ is arbitrary,
	$$\P\left\{\limsup_{t \to \infty}\frac1{t^\gamma}\left|\int_0^t b(s)\d B(s)\right| > 0\right\} = 0.$$
	Therefore,
	$$\P\left\{\limsup_{t \to \infty}\frac1{t^\gamma}\left|\int_0^t b(s)\d B(s)\right| = 0\right\} = 1,$$
	that is
	$$\P\left\{\lim_{t \to \infty}\frac1{t^\gamma}\int_0^t b(s)\d B(s) = 0\right\} = 1,$$
	and the lemma is proved.
\end{proof}

Let $X$ be a solution of the one-dimensional non-autonomous SDE
\begin{equation}
	\label{general_sde} \d X(t) = a(X(t), t, \omega)\d t + b(X(t), t, \omega)\d B(t), \qquad X(0) = x_0 \in (x_1, x_2),
\end{equation}
where $a = a(x, t, \omega), b = b(x, t, \omega)$ are $\B(\R)\times[0, t]\times\F_t$-measurable. 

The next theorem generalizes the results of Gikhman and Skorokhod (see § 17 of Part I in \cite{gichman}).

\begin{theorem}
	Suppose that:
	\begin{itemize}
		\item the coefficient $a$ is bounded and
		$$\lim_{\substack{x \to +\infty \\ t \to \infty}} a(x, t) = A \ \text{a.s.},$$
		where $A > 0$ is a positive random variable;
		\item there exist constants $\beta \in \left(0, \frac12\right)$ and $C > 0$ such that
		$$\P\{\forall x \in \R \ \forall t \geq 0 \ |b(x, t)| \leq C(1 + |x|^\beta)\} = 1;$$
		\item $X(t) \to +\infty$ a.s.
	\end{itemize}
	Then almost surely
	$$X(t) \sim At, \ t \to \infty.$$
\end{theorem}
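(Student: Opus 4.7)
The plan is to split
\[
\frac{X(t)}t = \frac{x_0}t + \frac1t\int_0^t a(X(s),s)\,\d s + \frac1t\int_0^t b(X(s),s)\,\d B(s),
\]
show that the first term vanishes, the drift average converges to $A$, and the martingale term is $o(1)$ a.s., so that $X(t)/t \to A$ almost surely.

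For the drift part I would argue as follows. The hypothesis that $a(x,t)\to A$ as $x,t\to\infty$ together with the assumption $X(t)\to+\infty$ a.s.\ gives $a(X(s),s)\to A$ a.s. Since $a$ is uniformly bounded, a classical Cesàro-type argument (dominated convergence after a change of variables $s=tu$, or an elementary $\varepsilon$-split into $[0,T]$ and $[T,t]$) yields $\frac1t\int_0^t a(X(s),s)\,\d s\to A$ a.s.\ and in particular $D_t:=\int_0^t a(X(s),s)\,\d s$ satisfies $|D_t|\le M t$ with $M:=\sup|a|$.

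The crux of the proof is controlling the Itô integral $M_t:=\int_0^t b(X(s),s)\,\d B(s)$ via Lemma~\ref{lemma_integral}, for which I need a polynomial moment bound on $X$. Write $|X(t)|\le|x_0|+Mt+|M_t|$. Then, using $|b|\le C(1+|x|^\beta)$ and Jensen's inequality (recall $2\beta<1$),
\[
\E|M_s|^{2\beta}\le(\E M_s^2)^\beta=\left(\E\int_0^s b^2(X(u),u)\,\d u\right)^{\beta},
\]
and elementary inequalities reduce the problem to a Bihari-type inequality
\[
\E\langle M\rangle_t\le C_1(1+t^{2\beta+1})+C_1\int_0^t(\E\langle M\rangle_s)^\beta\,\d s.
\]
A bootstrap on a bound of the form $\E\langle M\rangle_t\le K t^\alpha$ is self-consistent provided $\alpha\ge\max(2\beta+1,\,1/(1-\beta))$, and both quantities are strictly less than $2$ because $\beta<1/2$. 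Consequently $\E X(s)^2\le K(1+s^\alpha)$ with $\alpha<2$, so that, again by Jensen, $\E|X(s)|^{2\beta}\le\mathrm{const}\,(1+s^{\alpha\beta})$, and therefore
\[
\E b^2(X(s),s)\le\mathrm{const}\,(1+s^{\alpha\beta}),\qquad \alpha\beta<1.
\]
Setting $\beta':=\alpha\beta/2<1/2$, Lemma~\ref{lemma_integral} with $\gamma=1>\beta'+1/2$ gives $M_t/t\to 0$ a.s.

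Combining the three limits yields $X(t)/t\to A$ a.s., i.e.\ $X(t)\sim At$. The main obstacle is the moment-bound step: the nonlinear integral inequality for $\E\langle M\rangle_t$ does not close by a direct Grönwall estimate, and one must exploit the strict inequalities $\beta<1/2$ and $\alpha\beta<1$ to make the bootstrap self-consistent; everything else is a fairly direct application of Lemma~\ref{lemma_integral} and Cesàro convergence.
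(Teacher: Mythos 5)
Your proposal is correct and follows essentially the same route as the paper: the same decomposition of $X(t)/t$, a polynomial second-moment bound obtained from a Bihari-type nonlinear Gr\"onwall inequality (the paper applies its Appendix Lemma~\ref{lemma_gronwall} directly to $u(t)=\E X^2(t)$, getting $\E X^2(t)\le C(1+t^2)$, where you run the equivalent estimate on $\E\langle M\rangle_t$), followed by Lemma~\ref{lemma_integral} with $\gamma=1$ and a Ces\`aro argument for the drift. The one step to tighten is the ``self-consistent bootstrap'': consistency of an ansatz is not by itself a proof, so you should close it by invoking Lemma~\ref{lemma_gronwall} (or an equivalent comparison argument), exactly as the paper does.
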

\begin{remark}
	Here, $X(t)$ is a weak solution, not necessarily unique.
\end{remark}
\begin{proof}
	Consider SDE (\ref{general_sde}) in the integral form:
	$$X(t) = x_0 + \int_0^t a(X(s), s)\d s + \int_0^t b(X(s), s)\d B(s).$$
	Estimate the expectation of $X^2(t)$:
	$$\E X^2(t) = \E \left(\left(x_0 + \int_0^t a(X(s), s)\d s\right) + \int_0^t b(X(s), s)\d B(s)\right)^2 \leq$$
	$$\leq 2\E\left(x_0 + \int_0^t a(X(s), s)\d s\right)^2 + 2\E \left(\int_0^t b(X(s), s)\d B(s)\right)^2.$$
	Since $a$ is bounded, for the first expectation we have the estimate
	$$\E\left(x_0 + \int_0^t a(X(s), s)\d s\right)^2 \leq (C_1 t + C_2)^2$$
	for some $C_1 > 0, \ C_2 > 0$.
	Estimate the second expectation using the Itô isometry and Jensen's inequality:
	$$\E \left(\int_0^t b(X(s), s)\d B(s)\right)^2 \leq C_3t + C_3\int_0^t(\E X^2(s))^\beta\d s$$
	for some $C_3 > 0$.
	Hence, we obtain the estimate
	$$\E X^2(t) \leq (C_4 t + C_5)^2 + C_6 \int_0^t (\E X^2(s))^\beta\d s$$
	for some $C_4, C_5, C_6 > 0$, or denoting $u(t) := \E X^2(t)$,
	$$u(t) \leq (C_4 t + C_5)^2 + C_6 \int_0^t u^\beta(s) \d s.$$
	Using a generalization of Grönwall's inequality (Lemma \ref{lemma_gronwall} in Appendix), we get
	$$u(t) \leq \tilde C \left((1 - \beta)t + (C_4 t + C_5)^{2 - 2\beta}\right)^\frac{1}{1 - \beta}$$
	for some $\tilde C > 0$.
	So
	$$\limsup_{t \to \infty} \frac{u(t)}{t^2} \leq C_7$$
	for some $C_7 > 0$.
	From the last inequality and local boundedness of the function $u$, it follows that
	$$u(t) = \E X^2(t) \leq C_8 (t^2 + 1)$$
	for some $C_8 > 0$.
	Then using Jensen's inequality,
	$$\E b^2(X(t), t) \leq \left(C\left(1 + \E|X(t)|^\beta\right)\right)^2 \leq 2C^2\left(1 + \left(\E|X(t)|^\beta\right)^2\right) \leq$$
	$$\leq 2C^2\left(1 + (\E X^2(t))^\beta\right) \leq 2C^2\left(1 + (C_8 (t^2 + 1))^\beta\right) \leq C_9 (1 + t^{2\beta})$$
	for some $C_9 > 0$.
	
	Hence, by Theorem \ref{lemma_integral}, almost surely
	$$\frac1t \int_0^t b(X(s), s)\d B(s) \to 0, \ t \to \infty.$$
	Therefore, almost surely for large $t$,
	$$\frac{X(t)}t = \frac{x_0}t + \int_0^t \frac{a(X(s), s)}s \d s + \frac1t \int_0^t b(X(s), s)\d B(s).$$
	Going to the limit as $t \to \infty$, we get almost surely
	$$\lim_{t \to \infty}\frac{X(t)}t = A$$
	and the theorem is proved.
\end{proof}

The next theorem states that the previous one holds if the coefficients have those properties for large $x$.

\begin{theorem}
Suppose that:
\begin{itemize}
	\item the coefficient $a = a(x, t)$ is bounded for $x \geq x_* > 0$, $t \geq 0$ and there exists the limit
	$$\lim_{x, t \to \infty} a(x, t) = A \ \text{a.s.},$$
	where $A > 0$ is a positive random variable;
	\item there exist $\beta \in \left(0, \frac12\right)$, $C > 0$, and $x_* > 0$ such that almost surely for any $x \geq x_* > 0$ and $t \geq 0$,
	$$|b(x, t)| \leq Cx^\beta;$$
	\item $X(t) \to +\infty$ a.s.
\end{itemize}
Then almost surely
$$X(t) \sim At, \ t \to \infty.$$
\end{theorem}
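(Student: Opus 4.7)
The plan is to reduce the statement to the previous theorem by extending the coefficients off the region $\{x \geq x_*\}$. Define
$$\tilde a(x, t, \omega) := a(x \vee x_*, t, \omega), \qquad \tilde b(x, t, \omega) := b(x \vee x_*, t, \omega).$$
These agree with $a, b$ on $\{x \geq x_*\}$ and satisfy the hypotheses of the previous theorem globally: $\tilde a$ is bounded on $\R \times [0, \infty)$ by the bound for $a|_{\{x \geq x_*\}}$; using subadditivity of $x \mapsto x^\beta$,
$$|\tilde b(x, t)| = |b(x \vee x_*, t)| \leq C(x \vee x_*)^\beta \leq Cx_*^\beta + C|x|^\beta \leq C'(1 + |x|^\beta);$$
and $\tilde a(x, t) \to A$ as $x, t \to \infty$, since the limit depends only on large $x$.

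Since $X(t) \to \infty$ almost surely, the random ``last exit time'' $T(\omega) := \sup\{s \geq 0 : X(s) \leq x_*\}$ is a.s.\ finite (though not a stopping time), and for $s > T(\omega)$ the original and extended coefficients coincide along the trajectory of $X$. Adding and subtracting $\tilde a, \tilde b$ in the integral form of the SDE,
$$X(t) = x_0 + \int_0^t \tilde a(X(s), s)\,\d s + \int_0^t \tilde b(X(s), s)\,\d B(s) + E_1(t) + E_2(t),$$
where the errors $E_1(t) := \int_0^t [a - \tilde a](X(s), s)\,\d s$ and $E_2(t) := \int_0^t [b - \tilde b](X(s), s)\,\d B(s)$ have integrands pathwise supported on $[0, T(\omega)]$. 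Hence $E_1$ stabilizes to a finite random limit as $t \to \infty$, and $E_2$, being a local martingale whose quadratic variation is pathwise bounded, also converges a.s.\ to a finite limit; both are therefore $o(t)$ a.s.

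The two remaining integrals are handled as in the previous theorem. Pathwise Cesaro averaging gives $\frac{1}{t} \int_0^t \tilde a(X(s), s)\,\d s \to A$ a.s., since $|\tilde a|$ is bounded and $\tilde a(X(s), s) \to A$ as $s \to \infty$ (using $X \to \infty$ together with the limit hypothesis on $a$). For the stochastic part, Lemma \ref{lemma_integral} yields $\frac{1}{t}\int_0^t \tilde b(X(s), s)\,\d B(s) \to 0$ a.s., provided $\E \tilde b^2(X(s), s) \leq C(1 + s^{2\beta})$; by the polynomial growth of $\tilde b$ and Jensen's inequality this reduces to the moment bound $\E X^2(s) \leq C''(1 + s^2)$.

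The main obstacle will be establishing this moment bound under the weaker hypotheses, since the Gronwall argument of the previous theorem relied on global bounds for $a, b$ that are unavailable here on $\{x < x_*\}$. My plan is to handle it by localization: apply the Gronwall-type estimate to $X$ stopped at $\tau_n := \inf\{t \geq 0 : X(t) \notin (1/n, n)\}$, where along the stopped trajectory the coefficients are a.s.\ bounded, and pass to the limit via Fatou's lemma using that $\tau_n \to \infty$ a.s.\ (a consequence of $X \to \infty$). A pathwise alternative bypasses moments altogether: apply Dambis--Dubins--Schwarz to represent the stochastic integral as $W(\langle M \rangle_t)$, bound $\langle M \rangle_t = O(t \sup_{s \leq t} X^{2\beta}(s))$, and bootstrap from a crude polynomial bound on $\sup_{s \leq t} X(s)$ derived from the bounded drift and the law of the iterated logarithm; the strict inequality $\beta < 1/2$ ensures that the iteration converges and yields $\langle M \rangle_t = o(t^2)$, hence $|M(t)| = o(t)$ a.s.
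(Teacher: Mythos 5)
Your reduction keeps the original process $X$ and only modifies the coefficients off $\{x \geq x_*\}$, whereas the paper modifies the process itself; this difference is not cosmetic, and it is precisely where your argument breaks. The decomposition with the error terms $E_1, E_2$ and their treatment (integrands supported on a pathwise-finite time interval, hence a.s.\ finite limits and $o(t)$) is fine, as is the Ces\`aro argument for the drift. The genuine gap is the one you flag yourself: the moment bound $\E X^2(s) \leq C''(1+s^2)$ (or even the weaker bound $\E (X^+(s))^{2\beta} \leq C(1+s^{2\beta})$, which is all that Lemma \ref{lemma_integral} actually requires, since $\tilde b(X(s),s)=b(X(s)\vee x_*,s)$ only sees the positive part of $X$) is not derivable from the hypotheses, and neither of your proposed fixes closes it. The theorem assumes nothing about $a$ and $b$ on $\{x < x_*\}$ beyond what is implicit in $X$ being a solution; they are merely measurable random functions there and need not be bounded on compact sets. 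So the localization at $\tau_n = \inf\{t : X(t)\notin(1/n,n)\}$ does not yield bounded coefficients along the stopped path (and, separately, $\tau_n$ need not tend to infinity: $X$ is real-valued and only tends to $+\infty$ eventually, so it may well hit $0$ at a finite time, capping all the $\tau_n$). Even granting a bound $C_n(1+t^2)$ for each $n$, the constants would depend on $n$ and Fatou gives nothing uniform. The Dambis--Dubins--Schwarz alternative has the same defect: both the estimate $\langle M\rangle_t = O\bigl(t\sup_{s\leq t}X^{2\beta}(s)\bigr)$ and the ``crude polynomial bound from the bounded drift'' invoke the growth conditions on $a$ and $b$ along portions of the trajectory where none are assumed.

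The paper avoids all of this by transforming the process rather than the coefficients: it takes $f\in\C^2$ with $f\equiv 0$ on $(-\infty,x_*)$ and $f(x)=x$ for $x>2x_*$, and applies the previous theorem to $\tilde X=f(X)$. Since $f'=f''=0$ below $x_*$, the It\^o coefficients $\tilde a = af'+\tfrac12 b^2f''$ and $\tilde b = bf'$ vanish identically on the uncontrolled region --- no information about $a,b$ there is needed --- while on $\{x\geq x_*\}$ they inherit the assumed bounds expressed in terms of $\tilde X$, so the Gr\"onwall moment estimate goes through for $\tilde X$. To salvage your route you would have to prove the moment bound for a truncation of $X$ rather than for $X$ itself (for instance by controlling the excursions of $X$ above $x_*$, which by path continuity all start at $x_*$), and at that point you have essentially reconstructed the paper's device.
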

\begin{proof}
Construct a twice continuously differentiable function $f$ such that
$$f(x) = \begin{cases}
	0, \ x < x_*, \\
	x, \ x > 2x_*.
\end{cases}$$

Apply Itô's lemma to the process $\tilde X(t) := f(X(t))$:
$$\d \tilde X(t) = \tilde a(t)\d t + \tilde b(t) \d B(t),$$
where
$$\tilde a(t) := a(X(t), t)f'(X(t)) + \frac12 b^2(X(t), t)f''(X(t)),$$
$$\tilde b(t) := b(X(t), t)f'(X(t)).$$

Considering each of the cases $X(t) < x_*$, $x_* \leq X(t) \leq 2x_*$, and $X(t) > 2x_*$, it is easy to see that $\tilde a$ is bounded, almost surely
$$\lim_{t \to \infty} \tilde a(t) = \lim_{t \to \infty} a(X(t), t) = A$$
and almost surely
$$|\tilde b(t)| \leq \const (1 + \tilde X^\beta(t)).$$

It is clear that $X(t) \to \infty, \ t \to \infty,$ a.s. iff  $\tilde X(t) \to \infty, \ t \to \infty,$ a.s.

Applying the previous theorem to $\tilde X$, we obtain $\tilde X(t) \sim At, \ t \to \infty,$ a.s., which implies that $X(t) \sim At, \ t \to \infty,$ a.s.
\end{proof}

The following theorem lets to find the asymptotics of the solution in the case when the drift coefficient has power growth.

\begin{theorem}\label{theorem_power_asymptotics}
Suppose that:
\begin{enumerate}[label=(\arabic*)]
\item there exist a positive random variable $A > 0$ and a constant $\alpha \in (-1, 1)$ such that almost surely
$$a(x, t) \sim Ax^\alpha, \ x, t \to \infty;$$
\item there exist constants $A_1 \geq 0$ and $x_* > 0$ such that 
$$a(x, t) \leq A_1x^\alpha, \ x > x_*, \ t \geq 0;$$
\item there exist constants $C > 0$, $\beta \in (0, \frac{\alpha + 1}2)$, $x_* > 0$ such that almost surely for any $x \geq x_*$, $t \geq 0$,
$$|b(x, t)| \leq Cx^\beta;$$
\item $X(t) \to +\infty, \ t \to \infty,$ a.s.
\end{enumerate}
Then almost surely
$$X(t) \sim \left((1 - \alpha)At\right)^\frac1{1 - \alpha}.$$
\end{theorem}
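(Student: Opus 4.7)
The plan is to reduce this theorem to the preceding one by the power change of variable that linearizes the drift asymptotically. Motivated by the ODE $\dot x = Ax^\alpha$, whose solution satisfies $x^{1-\alpha}/(1-\alpha) = At + \const$, I would choose a twice continuously differentiable $g\colon \R \to \R$ with $g(x) = x^{1-\alpha}/(1-\alpha)$ for $x \geq 2x_*$ and set $Y(t) := g(X(t))$. Applying Itô's formula, using $g'(x) = x^{-\alpha}$ and $g''(x) = -\alpha x^{-\alpha-1}$ on the region $\{X(t) \geq 2x_*\}$, gives
$$\d Y(t) = \tilde a(X(t), t)\,\d t + \tilde b(X(t), t)\,\d B(t),$$
where for $x \geq 2x_*$
$$\tilde a(x, t) = a(x, t)\, x^{-\alpha} - \tfrac{\alpha}{2}\, b^2(x, t)\, x^{-\alpha-1}, \qquad \tilde b(x, t) = b(x, t)\, x^{-\alpha},$$
and on $\{X(t) < 2x_*\}$ the smoothness of $g$ yields bounded $\tilde a$ and $\tilde b$, which is harmless for the large-time analysis.

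Next I would verify that the pair $(\tilde a, \tilde b)$, re-expressed in the new spatial variable $y$ via the inverse $h(y) = ((1-\alpha)y)^{1/(1-\alpha)}$ valid for large $y$, satisfies the hypotheses of the previous theorem. From (1) and (2), $a(x,t) x^{-\alpha} \to A$ a.s.\ as $x, t \to \infty$ and is bounded above by $A_1$ for $x \geq x_*$. From (3), $|b^2(x,t)\, x^{-\alpha-1}| \leq C^2 x^{2\beta-\alpha-1}$, and since $2\beta - \alpha - 1 < 0$ by hypothesis this term is bounded and tends to $0$. Hence $\tilde a(x, t)$ is bounded on $\{x \geq x_*\}$ and has a.s.\ limit $A$ as $x, t \to \infty$. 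For the diffusion, $|\tilde b(x, t)| \leq C x^{\beta-\alpha}$, and rewriting in terms of $y$ gives $|\tilde b(h(y), t)| \leq C'\, y^{\tilde\beta}$ with
$$\tilde\beta := \frac{\beta - \alpha}{1 - \alpha}.$$
The crucial inequality $\tilde\beta < \tfrac12$ is equivalent to $\beta < (1+\alpha)/2$, which is exactly assumption (3); this is where the power-growth threshold on $b$ is used.

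Finally, since $1-\alpha > 0$, the a.s.\ relation $X(t) \to \infty$ implies $Y(t) = g(X(t)) \to \infty$ a.s., so the previous theorem applied to $Y$ yields $Y(t) \sim At$ a.s., and inverting $g$ gives $X(t) \sim ((1-\alpha)At)^{1/(1-\alpha)}$ a.s., which is the claim.

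The main technical obstacle is the asymptotic cancellation in $\tilde a$: the Itô correction term $-\tfrac{\alpha}{2} b^2 x^{-\alpha-1}$ must be negligible compared to $a\cdot x^{-\alpha} \sim A$, which forces precisely the subcritical gap $2\beta < 1+\alpha$. A minor bookkeeping nuisance is that $\tilde a$ and $\tilde b$ arise naturally as functions of $X(t)$ rather than of $Y(t)$, so one has to pass through the inverse $h$ and verify that the hypotheses of the previous theorem really are in the required $(y, t)$-form on the relevant half-line; this is routine once $\tilde\beta < \tfrac12$ is in hand.
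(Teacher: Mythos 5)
Your proposal is correct and follows essentially the same route as the paper: the change of variable $f(x)=x^{1-\alpha}/(1-\alpha)$ for large $x$, Itô's formula giving $\tilde a = a\,x^{-\alpha}-\tfrac{\alpha}{2}b^2x^{-\alpha-1}$ and $\tilde b = b\,x^{-\alpha}$, the observation that the Itô correction vanishes because $2\beta<\alpha+1$, the bound $|\tilde b|\leq \const\cdot \tilde X^{\tilde\beta}$ with $\tilde\beta=\frac{\beta-\alpha}{1-\alpha}<\tfrac12$, an appeal to the preceding theorem, and inversion of $f$. The paper's argument is the same in every essential respect.
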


\begin{remark}
Notice that the asymptotics of $X$ is nothing but the asymptotics of the following ODE solution $x$:
$$\d x(t) = Ax^\alpha(t) \d t.$$
\end{remark}

\begin{proof}
Construct a twice continuously differentiable function $f$ such that
$$f(x) = \frac{x^{1 - \alpha}}{1 - \alpha}, \ x \geq x_*.$$

Apply Itô lemma to the process $\tilde X(t) := f(X(t))$:
$$\d \tilde X(t) = \tilde a(t)\d t + \tilde b(t)\d B(t),$$
where
$$\tilde a(t) := a(X(t), t)f'(X(t)) + \frac12 b^2(X(t), t)f''(X(t)),$$
$$\tilde b(t) := b(X(t), t)f'(X(t)).$$

For large $t$,
$$\tilde a(t) = \frac{a(X(t), t)}{X^\alpha(t)} - \frac\alpha2\frac{b^2(X(t), t)}{X^{\alpha + 1}(t)}\d t,$$
$$\tilde b(t) = \frac{b(X(t), t)}{X^\alpha(t)}.$$
We have almost surely
$$\lim_{t \to \infty} \frac{a(X(t), t)}{X^\alpha(t)} = \lim_{t \to \infty} \frac{AX^\alpha(t)}{X^\alpha(t)} = A$$
and
$$\lim_{t \to \infty} \frac{b^2(X(t), t)}{X^{\alpha + 1}(t)} \leq \lim_{t \to \infty}\frac{C^2 X^{2\beta}(t)}{X^{\alpha + 1}(t)} = C^2\lim_{t \to \infty} \frac1{X^{\alpha - 2\beta + 1}(t)} = 0$$
since $\alpha - 2\beta + 1 > 0$ and $X(t) \to \infty, \ t \to \infty,$ a.s. by condition.
Hence, almost surely
$$\lim_{t \to \infty} \tilde a(t) = A.$$

Denote $\tau := \inf\{t \geq 0\colon \forall s \geq t \ X(s) \geq x_*\}$. For $t \geq \tau$
$$|\tilde b(t)| = \frac{|b(X(t), t)|}{X^\alpha(t)} \leq C\frac{X^\beta(t)}{X^\alpha(t)} = CX^{\beta - \alpha}(t) = C\left((A(1 - \alpha)\tilde X(t))^\frac1{1 - \alpha}\right)^{\beta - \alpha} = $$
$$= \const \cdot (\tilde X(t))^\frac{\beta - \alpha}{1 - \alpha} =: \const \cdot (\tilde X(t))^{\tilde \beta}.$$

Hence, we have the following equivalences:

$$\tilde \beta < \frac12 \Leftrightarrow \frac{\beta - \alpha}{1 - \alpha} < \frac12 \Leftrightarrow 2\beta - 2\alpha < 1 - \alpha \Leftrightarrow \beta < \frac{\alpha + 1}2.$$

It is clear that $X(t) \to \infty, \ t \to \infty,$ a.s. iff $\tilde X(t) \to \infty, \ t \to \infty,$ a.s. Applying the previous theorem to the process $\tilde X$, we obtain $\tilde X(t) \sim At, \ t \to \infty,$ a.s., i.e., $f(X(t)) \sim At, \ t \to \infty,$ a.s.

For sufficiently large $t$, there exists the inverse function $f^{-1}(t)$, which is a power function. Applying $f^{-1}$ to both parts of the last equivalence, we obtain almost surely
$$f^{-1}(f(X(t))) \sim f^{-1}(t), \ t \to \infty,$$
i.e., almost surely
$$X(t) \sim \left((1 - \alpha)At\right)^\frac1{1 - \alpha}, \ t \to \infty.$$
\end{proof}
\begin{remark}
	Instead of (1), consider the following condition:
	\begin{enumerate}[label=(1')]
		\item there exist a positive random variable $A_2 > 0$ and a constant $\alpha \in (-1, 1)$ such that almost surely
		$$a(x, t) \geq A_2x^\alpha, \ x \geq x_* > 0, t \geq 0.$$
	\end{enumerate}
	If conditions (1'), (2), (3), and (4) hold then one can show (similarly to the proof of the previous theorem) that
	$$\liminf_{t \to \infty} \frac{X(t)}{t^\frac1{1 - \alpha}} \geqslant \left((1 - \alpha)A_2\right)^\frac1{1 - \alpha}.$$
\end{remark}

\section{Transience of Solutions}

Consider the system (\ref{sde_radius_new})-(\ref{sde_angle_new}) again:
\begin{eqnarray}
	\label{sde_radius_new} \d R(t) = \mu(R(t), \Phi(t))\d t + \sigma(R(t), \Phi(t))\d W^{(1)}(t), \ R(0) = r_0, \\
	\label{sde_angle_new} \d\Phi(t) = \nu(R(t), \Phi(t))\d t + \chi(R(t), \Phi(t))\d W(t), \ \Phi(0) = \varphi_0,
\end{eqnarray}
where coefficients $\mu, \sigma, \nu, \chi$ are arbitrary (not related to the coefficients $a, b$ of the initial SDE (\ref{sde})).

Define the next operators for the radius SDE (\ref{sde_radius}):
$$L_\varphi[V](r) := \mu(r, \varphi)V'(r) + \frac12\sigma^2(r, \varphi)V''(r), \ r > 0, \ \varphi \in \R^n, \ V \in \C^2(0, \infty),$$
where $\C^2(0, \infty)$ is the set of all twice continuously differentiable function on $(0, \infty)$.

\begin{theorem}\label{theorem_transiency}
Suppose that:
\begin{enumerate}
\item for any starting point $(r_0, \varphi_0)$, $r_0 \neq 0$, there exists a unique solution $(R, \Phi)$ of the system (\ref{sde_radius_new})-(\ref{sde_angle_new}), which is a strong Markov process;
\item $\mu, \sigma$ are continuous, $\sigma(r, \varphi) \geq \sigma_* > 0, \ r > 0, \ \varphi \in \R^n$;
\item there exist a non-decreasing function $V_0$ and $\delta > 0$ such that $V_0(0) = -\infty$ and
$$\forall r \in (0, \delta) \ \forall \varphi \in \R^n \ L_\varphi[V_0](r) \leqslant 0;$$
\item there exist a decreasing function $V_\infty$ and a constant $\Delta > \delta$ such that $|V_\infty(\infty)| < \infty$ and 
$$\forall r > \Delta \ \forall \varphi \in \R^n \ L_\varphi[V_\infty](r) \leqslant 0.$$
\end{enumerate}
Then almost surely:
\begin{enumerate}
\item $R(t) > 0, \ t \geqslant 0$;
\item $R(t) \to \infty, \ t \to \infty$.
\end{enumerate}
\end{theorem}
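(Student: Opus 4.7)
The plan is to use $V_0$ and $V_\infty$ as Lyapunov functions for $R$, combined with Itô's formula, optional stopping, and martingale convergence. The coupling with $\Phi$ is harmless because the hypotheses give $\varphi$-uniform bounds $L_\varphi V_0 \leqslant 0$ and $L_\varphi V_\infty \leqslant 0$, so Itô applied to $V_i(R(\cdot))$ always yields a local supermartingale regardless of what $\Phi$ is doing.

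For the first claim, I would suppose first that $r_0 \in (0, \delta)$. For $0 < \varepsilon < r_0$, set $\tau_\varepsilon = \inf\{t : R(t) = \varepsilon\}$ and $\tau_\delta = \inf\{t : R(t) = \delta\}$, both a.s.\ finite by non-degeneracy ($\sigma \geqslant \sigma_* > 0$). By Itô, $V_0(R(\cdot \wedge \tau_\varepsilon \wedge \tau_\delta))$ is a bounded supermartingale, and optional stopping gives
$$V_0(r_0) \geqslant V_0(\varepsilon)\, \P_{r_0}(\tau_\varepsilon < \tau_\delta) + V_0(\delta)\, \P_{r_0}(\tau_\delta < \tau_\varepsilon).$$
Sending $\varepsilon \downarrow 0$ and using $V_0(0^+) = -\infty$ forces $\P_{r_0}(\tau_\varepsilon < \tau_\delta) \to 0$, so $R$ exits $(0, \delta)$ through $\delta$ a.s. For general $r_0 > 0$, a strong Markov decomposition across successive excursions in and out of $(0, \delta)$, together with path continuity, upgrades this to $R(t) > 0$ for all $t$ a.s.

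For the second claim, fix any $r_* > \Delta$. For $r_0 > r_*$ and $N > r_0$, let $\tau_{r_*}, \tau_N$ be the hitting times of $r_*$ and $N$. Optional stopping applied to the bounded supermartingale $V_\infty(R(\cdot \wedge \tau_{r_*} \wedge \tau_N))$, followed by $N \to \infty$ (using $|V_\infty(\infty)| < \infty$), yields
$$\P_{r_0}(\tau_{r_*} < \infty) \leqslant \frac{V_\infty(r_0) - V_\infty(\infty)}{V_\infty(r_*) - V_\infty(\infty)},$$
which tends to $0$ as $r_0 \to \infty$. On $\{\tau_{r_*} = \infty\}$ the bounded supermartingale $V_\infty(R(t))$ converges a.s., and hence so does $R(t)$ by strict monotonicity of $V_\infty$. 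The non-degeneracy $\sigma \geqslant \sigma_*$ means that starting from any interior point of any compact subinterval of $(0, \infty)$, $R$ a.s.\ exits that subinterval in finite time; by a strong Markov argument this rules out convergence of $R$ to any finite value, so $R(t) \to \infty$ on $\{\tau_{r_*} = \infty\}$.

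The main obstacle is upgrading the last step to a.s.\ transience from every starting point, since the displayed estimate only controls escape from very high initial positions. The standard remedy is to sample $R$ at successive upcrossings of an increasing sequence of levels $r_* < L_1 < L_2 < \ldots$ chosen so that $\P_{L_k}(\tau_{r_*} < \infty) \leqslant 2^{-k}$ (possible by the displayed inequality); a strong Markov/Borel--Cantelli argument then shows that with probability one only finitely many of these excursions ever return to $r_*$, so $R$ stays above $r_*$ from some time on and is transient by the previous paragraph.
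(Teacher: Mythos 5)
Your treatment of the first claim and of the ``started high, never returns'' half of the second claim matches the paper's own Steps 1 and 3: the same optional-stopping estimates with $V_0$ and $V_\infty$, uniform in $\varphi$, and the same use of non-degeneracy to guarantee exit from compact intervals. The genuine gap is in the passage to arbitrary starting points. Your concluding Borel--Cantelli argument samples $R$ at successive upcrossings of levels $L_1 < L_2 < \cdots$, but it tacitly assumes those upcrossings occur, i.e.\ that after each return to $r_*$ the process a.s.\ reaches the next level. Nothing in your proposal establishes this: conditions 3 and 4 impose nothing on the generator in the band $[\delta,\Delta]$, and the only consequence of $\sigma\geqslant\sigma_*$ you invoke is that $R$ exits every compact $[a,b]\subset(0,\infty)$ in finite time. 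Finite exit alone says nothing about which end the exit occurs through, and it does not exclude the event that $R$ stays bounded forever --- for instance oscillating in $(0,N]$ with $\liminf_t R(t)=0$ while never hitting $0$, an event fully consistent with your claim-1 argument, since each visit to $(0,\delta)$ merely returns to $\delta$. So ``$R$ stays above $r_*$ from some time on'' is asserted rather than proved.

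The missing ingredient is the paper's Step 2. Fix $\tilde\Delta>\Delta$; on the compact band $[\delta/2,\tilde\Delta]$ continuity of $\mu,\sigma$ and the bound $\sigma\geqslant\sigma_*$ give $|\mu|\leqslant M$ and $\sigma\leqslant\Sigma$, so the auxiliary decreasing function $V(r)=e^{-2Mr/\Sigma^2}$ satisfies $L_\varphi[V](r)\leqslant 0$ there for every $\varphi$. Lemma \ref{generalized_lemma_s} then yields a lower bound $p>0$, uniform in $\varphi$, for the probability that the process started at $\delta$ exits $[\delta/2,\tilde\Delta]$ at the top; combined with the claim-1 fact that from $\delta/2$ the process returns to $\delta$ a.s., the strong Markov property and a geometric-trials argument give $\tau_{\tilde\Delta}<\infty$ a.s.\ for every $\tilde\Delta$. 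Once that is in place, your final paragraph goes through --- though at that point the paper's own Step 3 is more direct: it simply sends $\tilde\Delta\to\infty$ in the escape estimate $\P_{\tilde\Delta}\{\inf_t R(t)\leqslant\Delta^*\}\leqslant p_{\tilde\Delta}\to 0$ and needs no Borel--Cantelli or choice of levels $L_k$.
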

\begin{remark}
If coefficients $\mu, \sigma, \nu, \chi$ are locally Lipschitz and have linear growth at infinity then it is known that the SDE solution exists, is unique, and is strong Markov process (see § 10 of Part I in \cite{gichman}).
\end{remark}
\begin{remark}
The third condition implies the first statement of the theorem. The fourth condition guarantees that being far away from the origin, the process $R$ goes to infinity with a high probability.
\end{remark}
\begin{remark}
Notice that there are no requirements for behavior of the generator in the interval $r \in [\delta, \Delta]$. It may be possible to find a common function $V$ in the interval $(0, \infty)$ instead of two functions $V_0$ and $V_\infty$ such that $V(0) = -\infty$, $V(\infty) \in \R$, and
$$\forall r > 0 \ \forall \varphi \in \R^n \ L_\varphi[V](r) \leqslant 0.$$
\end{remark}
\begin{proof}
Notice that since coefficients $\mu, \sigma$ are continuous, they are bounded on compact sets, hence, by Corollary \ref{corollary_exit}, the process $R$ exits any interval $[a, b] \subset (0, \infty)$ almost surely. Define $\tau_r := \inf\{t \geqslant 0\colon R(t) = r\}, \ r \geqslant 0$.
	
\emph{Step 1}. Suppose first that $R(0) = r_0 \in (0, \delta)$. Let $\varepsilon \in (0, r_0)$. Since the solution exits any interval a.s., $\tau_\varepsilon \wedge \tau_\delta < \infty$ a.s. Then by Lemma \ref{generalized_lemma_s}, we have
$$\P\{\tau_\delta < \tau_\varepsilon\} \geqslant \frac{V_0(r_0) - V_0(\varepsilon)}{V_0(\delta) - V_0(\varepsilon)} \to 1, \ \varepsilon \to 0.$$
Therefore, $\tau_\delta < \infty$ a.s. and $\P\{\tau_0 < \tau_\delta\} = 0$. This implies (by virtue of continuity of $R$) that $\P\{R(t) > 0, \ t \geqslant 0\} = 1$. 

\emph{Step 2}. By the strong Markov property, the distribution of the process\footnote{By $R_\delta$ we denote the solution of the corresponding SDE with the starting point $R(0) = \delta$. Similarly we define $\Phi_\xi$.} $(R(\tau_\delta + t), \Phi(\tau_\delta + t))_{r \geqslant 0}$ is the same as the distribution of the process $(R_\delta(t), \Phi_\xi(t))$, where $\xi \sim \Phi(\tau_\delta)$, $\xi$ is independent of $\Phi(\tau_\delta)$. Therefore, without loss of generality, we suppose now that $R(0) = \delta$. Let $\tilde \Delta > \Delta$. Since $\mu$ and $\sigma$ are continuos and $\sigma > 0$, $\mu$ and $\sigma$ are bounded for $r \in [\delta / 2, \tilde\Delta]$ ($|\mu| \leq M$, $\sigma \leq \Sigma$ for some $M, \ \Sigma > 0$). Choose some decreasing function $V$ such that $L_\varphi[V](r) \leq 0$ for $r \in [\delta / 2, \tilde\Delta]$ and $\varphi \in \R^n$ (e.g., $V(r) = e^{-\frac{2M}{\Sigma^2}r}$). Since the solution exits any interval a.s., $\tau_{\tilde\Delta} \wedge \tau_{\delta/2} < \infty$ a.s. Then by Lemma \ref{generalized_lemma_s},
$$\P_{\delta}\left\{\tau_{\tilde\Delta} < \tau_{\delta/2}\right\} \geqslant \frac{V(\delta) - V(\delta/2)}{V(\tilde\Delta) - V(\delta/2)} =: p > 0,$$
$$\P_{\delta}\left\{\tau_{\delta/2} < \tau_{\tilde\Delta}\right\} \geqslant \frac{V(\tilde\Delta) - V(\delta)}{V(\tilde\Delta) - V(\delta/2)} = 1 - p < 1.$$
Using the strong Markov property $k$ times, one can show that the probability of exiting the interval $\left[\frac\delta2, \tilde\Delta\right]$ $k$ times from the left end and return into the interval $[\delta, \tilde\Delta]$ is not greater than $(1 - p)^k \to 0, \ k \to \infty$. Thus, $\tau_{\tilde\Delta} < \infty$ a.s.

\emph{Step 3}. By the strong Markov property, the distribution of the process $((R(\tau_{\tilde\Delta} + t), \Phi(\tau_{\tilde\Delta} + t)))_t$ is the same as the distribution of the process $((R_{\tilde\Delta}(t), \Phi_\xi(t)))_{t \geqslant 0}$, where $\xi \sim \Phi(\tau)$ and $\xi$ is independent of $\Phi(\tau)$. Therefore, without loss of generality, we suppose now that $R(0) = \tilde\Delta$. Let $L > \tilde\Delta$ and $\Delta^* \in (\Delta, \tilde\Delta)$. Since the solution exits any interval, $\tau_L \wedge \tau_{\Delta^*} < \infty$ a.s. By Lemma \ref{generalized_lemma_s},
$$\P_{\tilde\Delta}\left\{\tau_{\Delta^*} < \tau_L\right\} \leq \frac{V_\infty(L) - V_\infty(\tilde\Delta)}{V_\infty(L) - V_\infty(\Delta^*)}.$$
As $L \to \infty$, we obtain
$$\P_{\tilde\Delta}\left\{\inf_{t \geq 0}R(t) \leq \Delta^*\right\} \leq \frac{V_\infty(\infty) - V_\infty(\tilde\Delta)}{V_\infty(\infty) - V_\infty(\Delta^*)}.$$
Then we have the following estimates:
$$\P_{\tilde\Delta}\left\{\liminf_{t \to \infty}R(t) \leq \Delta^*\right\} \leq \P_{\tilde\Delta}\left\{\inf_{t \geq 0}R(t) \leq \Delta^*\right\} \leq  \frac{V_\infty(\infty) - V_\infty(\tilde\Delta)}{V_\infty(\infty) - V_\infty(\Delta^*)} = $$
$$= 1 - \frac{V_\infty(\tilde\Delta) - V_\infty(\Delta^*)}{V_\infty(\infty) - V_\infty(\Delta^*)} =: p_{\tilde\Delta}.$$
Notice that $p_{\tilde\Delta} \to 0$ as $\tilde\Delta \to \infty$. As $\tilde\Delta \to \infty$, we obtain
$$\P\left\{\liminf_{t \to \infty}R(t) \leq \Delta^*\right\} = 0.$$
As $\Delta^* \to \infty$, we obtain
$$\P\left\{\liminf_{t \to \infty}R(t) < +\infty\right\} = 0 \Rightarrow \P\left\{\liminf_{t \to \infty}R(t) = +\infty\right\} = 1 \Rightarrow$$
$$\Rightarrow \P\left\{\lim_{t \to \infty}R(t) = +\infty\right\} = 1.$$
\end{proof}

\begin{example}
Consider the following system of SDEs:
$$\d X_1(t) = a_1(X_1(t), X_2(t))\d t + \d W_1(t),$$
$$\d X_2(t) = a_2(X_1(t), X_2(t))\d t + \d W_2(t),$$
where $a_i, i \in \{1, 2\},$ are continuous and for large $x$,
$$\lim_{x_i \to \infty}\frac{a_i(x_1, x_2)}{\sqrt{|x_i|}} \sgn x_i  \geqslant 1, \ i \in \{1, 2\},$$

Applying Theorem \ref{theorem_transiency} to this system and setting $V_0(r) := \ln r$, $V_\infty(r) =: \frac1r$, we see that $|X(t)| \to \infty, \ t \to \infty,$ a.s.
\end{example}

\section{Stabilization of the Angle}

Consider SDE (\ref{sde_angle_new}) for $\Phi$.

\begin{theorem}\label{theorem_angle_stabilization}
Suppose that:
\begin{enumerate}
\item $\P\{\forall t \geqslant 0 \ R(t) > 0\} = 1$;
\item $\liminf_{t \to \infty} \frac{R(t)}{t^\gamma} \geqslant C^*$ for some random variable $C^* > 0$ and some non-random constant $\gamma > 0$;
\item $|\nu(r, \varphi)| \leqslant \frac {\mu^*}{r^{\delta_1}}, |\chi(r, \varphi)| \leqslant \frac {\chi^*}{r^{\delta_2}}$ for all $\varphi \in \R^n$ and large $r$, where $\nu^*, \chi^* > 0, \delta_1 > \frac1\gamma, \delta_2 > \frac1{2\gamma}$.
\end{enumerate}
Then there exists the limit $\lim_{t \to \infty} \Phi(t)$ a.s.
\end{theorem}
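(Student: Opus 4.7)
The plan is to put the angle equation into integral form,
$$\Phi(t) = \varphi_0 + \int_0^t \nu(R(s), \Phi(s))\,\d s + \int_0^t \chi(R(s), \Phi(s))\,\d W(s),$$
and to show that both the Lebesgue integral and the stochastic integral converge almost surely as $t \to \infty$; once both limits exist, so does $\lim_{t\to\infty}\Phi(t)$. The key observation is that hypothesis (2) furnishes, on a set of full probability, a (random) time $T$ such that $R(t) \geqslant \frac{C^*}{2}t^\gamma$ for every $t \geqslant T$. Combined with hypothesis (3), this yields, for $t \geqslant T$,
$$|\nu(R(t), \Phi(t))| \leqslant \const t^{-\gamma\delta_1}, \qquad |\chi(R(t), \Phi(t))|^2 \leqslant \const t^{-2\gamma\delta_2}.$$

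For the drift, the assumption $\delta_1 > 1/\gamma$ gives $\gamma\delta_1 > 1$, so $\int_T^\infty t^{-\gamma\delta_1}\,\d t < \infty$; adding the a.s.\ finite contribution $\int_0^T |\nu(R(s), \Phi(s))|\,\d s$ (which is finite by continuity of the path and the strict positivity of $R$ from hypothesis (1)) yields $\int_0^\infty |\nu(R(s), \Phi(s))|\,\d s < \infty$ a.s., so the drift integral converges absolutely. For the martingale $M(t) := \int_0^t \chi(R(s), \Phi(s))\,\d W(s)$, the same argument with $\delta_2 > 1/(2\gamma)$ gives $\langle M\rangle_\infty < \infty$ a.s. By the Dambis--Dubins--Schwarz representation (applied componentwise), each scalar component of $M$ equals a Wiener process evaluated at its own, a.s.\ finite, quadratic variation; continuity of Brownian paths then yields that $\lim_{t\to\infty} M(t)$ exists a.s.\ and is finite. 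Since $\Phi$ takes values in the closed unit sphere, the resulting limit $\Phi_\infty$ lies on it automatically.

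The main difficulty is really just the careful bookkeeping around the random cutoff $T$ at which the lower bound on $R(t)$ engages: this cutoff is a.s.\ finite but unbounded, so the argument must proceed path-by-path rather than through an $L^2$-style estimate as in Lemma \ref{lemma_integral}. Once the two tail integrals $\int_T^\infty |\nu|\,\d s$ and $\int_T^\infty |\chi|^2\,\d s$ are shown to be a.s.\ finite -- which is exactly what the calibration $\delta_1 > 1/\gamma$, $\delta_2 > 1/(2\gamma)$ guarantees -- the pre-$T$ contributions are automatically harmless and no further estimates on the coupled pair $(R, \Phi)$ are needed.
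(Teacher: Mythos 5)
Your proposal is correct and follows essentially the same route as the paper: integral form of the angle equation, absolute convergence of the drift integral from $\gamma\delta_1>1$, and a.s.\ finiteness of the quadratic variation from $2\gamma\delta_2>1$, giving convergence of the stochastic integral. You supply more detail than the paper does (the random cutoff $T$, the pre-$T$ contribution, and the Dambis--Dubins--Schwarz step), but the underlying argument is identical.
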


\begin{proof}
Rewrite SDE (\ref{sde_angle_new}) in the integral form:
$$\Phi(t) = \varphi_0 + \int_0^t \nu(R(s), \Phi(s))\d s + \int_0^t \chi(R(s), \Phi(s))\d W(s).$$
Then the limit
$$\Phi_\infty = \varphi_0 + \int_0^\infty \nu(R(t), \Phi(t))\d t + \int_0^\infty \chi(R(t), \Phi(t))\d W(t)$$
exist a.s. if the integrals in the right-hand side are convergent.

To prove convergence of the first and the second integrals, use conditions 2 and 3 of the theorem:
$$\delta_1 > \frac1\gamma \Rightarrow \delta_1\gamma > 1 \Rightarrow \int_1^\infty\frac{\d t}{t^{\delta_1\gamma}} < \infty \Rightarrow \int_0^\infty \left|\nu(R(t), \Phi(t))\right|\d t < \infty \ \text{a.s.};$$
$$\delta_2 > \frac1{2\gamma} \Rightarrow 2\delta_2\gamma > 1 \Rightarrow \int_1^\infty\frac{\d t}{t^{2\delta_2\gamma}}\d t < \infty \Rightarrow \int_0^\infty \chi^2(R(t), \Phi(t))\d t < \infty \ \text{a.s.} \Rightarrow$$
$$\Rightarrow \int_0^\infty \chi(R(t), \Phi(t))\d W(t) \ \text{is well-defined}.$$
\end{proof}

\section{Asymptotics of the Radius}

\begin{theorem}
Consider SDE (\ref{sde_radius_new}) for the radius:
$$\d R(t) = \mu(R(t), \Phi(t))\d t + \sigma(R(t), \Phi(t))\d W^{(1)}(t).$$

Suppose that the following conditions hold:
\begin{enumerate}
\item there exist a continuous bounded function $M\colon \R^n \to (0, \infty)$ and a constant $\alpha \in (-1, 1)$ such that for any $\varphi \in \R^n$,
$$\mu(r, \varphi) \sim M(\varphi)r^\alpha, \ r \to \infty;$$
\item there exist constants $C > 0$, $\beta \in (0, \frac{\alpha + 1}2)$, $r_* > 0$ such that for any $r \geq r_*$, $\varphi \in \R^n$,
$$|\sigma(r, \varphi)| \leq Cr^\beta;$$
\item $R(t) \to \infty, \ t \to \infty,$ a.s.;
\item $\exists \Phi_\infty := \lim_{t \to \infty} \Phi(t)$ a.s.
\end{enumerate}
Then
$$R(t) \sim \left((1 - \alpha)M(\Phi_\infty)t\right)^\frac1{1 - \alpha}, \ t \to \infty.$$
\end{theorem}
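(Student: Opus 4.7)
The strategy is to mimic the proof of Theorem~\ref{theorem_power_asymptotics} by Lamperti-transforming the radius and treating the angle $\Phi(t)$ as a random time-dependence in the one-dimensional problem. I would choose a $C^2$ function $f\colon(0,\infty)\to\R$ with $f(r)=r^{1-\alpha}/(1-\alpha)$ for $r\geqslant r_*$, and set $\tilde R(t):=f(R(t))$. By It\^o's formula, whenever $R(t)\geqslant r_*$ one has $d\tilde R(t)=\tilde\mu(t)\,dt+\tilde\sigma(t)\,dW^{(1)}(t)$, where
$$\tilde\mu(t)=\frac{\mu(R(t),\Phi(t))}{R(t)^\alpha}-\frac{\alpha}{2}\,\frac{\sigma^2(R(t),\Phi(t))}{R(t)^{\alpha+1}},\qquad \tilde\sigma(t)=\frac{\sigma(R(t),\Phi(t))}{R(t)^\alpha}.$$

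Next, I would verify the asymptotic behaviour of these transformed coefficients. The corrective term in $\tilde\mu(t)$ is bounded by $(|\alpha|/2)C^2 R(t)^{2\beta-\alpha-1}$, which tends to $0$ since $2\beta<\alpha+1$ by hypothesis~(2) and $R(t)\to\infty$ by hypothesis~(3). Combining $\Phi(t)\to\Phi_\infty$ a.s.\ with continuity of $M$ and hypothesis~(1) yields $\mu(R(t),\Phi(t))/R(t)^\alpha\to M(\Phi_\infty)$ a.s., so $\tilde\mu(t)\to M(\Phi_\infty)$, an almost surely positive random variable. For the diffusion, $|\tilde\sigma(t)|\leqslant CR(t)^{\beta-\alpha}=\const\cdot \tilde R(t)^{\tilde\beta}$ with $\tilde\beta:=(\beta-\alpha)/(1-\alpha)$, and $\tilde\beta<1/2$ is precisely $\beta<(\alpha+1)/2$. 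Viewing $\tilde\mu,\tilde\sigma$ as $(\tilde r,t,\omega)$-measurable coefficients through $R=f^{-1}(\tilde R)$, I would then apply the bounded-drift precursor to Theorem~\ref{theorem_power_asymptotics} (the second theorem of Section~2) to the resulting one-dimensional SDE for $\tilde R$, with limiting drift $A:=M(\Phi_\infty)$; this yields $\tilde R(t)\sim M(\Phi_\infty)t$ a.s. Inverting $f$ delivers the announced asymptotic $R(t)\sim\bigl((1-\alpha)M(\Phi_\infty)t\bigr)^{1/(1-\alpha)}$.

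The main obstacle is justifying the joint limit $\mu(R(t),\Phi(t))/R(t)^\alpha\to M(\Phi_\infty)$, because hypothesis~(1) only gives the equivalence $\mu(r,\varphi)\sim M(\varphi)r^\alpha$ pointwise in $\varphi$, while $(R(t),\Phi(t))$ varies in both variables simultaneously. I would split
$$\left|\frac{\mu(r,\Phi(t))}{r^\alpha}-M(\Phi_\infty)\right|\leqslant\left|\frac{\mu(r,\Phi(t))}{r^\alpha}-M(\Phi(t))\right|+\bigl|M(\Phi(t))-M(\Phi_\infty)\bigr|,$$
control the second summand by continuity of $M$ once $\Phi(t)$ lies in a small compact neighbourhood of $\Phi_\infty$, and close the first via local uniformity of the equivalence in hypothesis~(1) -- an implicit strengthening that is standard in this context. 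Subsidiary technicalities -- smoothing $f$ near the origin, restarting from the almost surely finite first time at which $R$ enters and remains in $[r_*,\infty)$, and checking boundedness of $\tilde\mu$ on the region pertinent to the auxiliary theorem -- are handled exactly as in the proof of Theorem~\ref{theorem_power_asymptotics}.
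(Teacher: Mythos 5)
Your proposal is correct and follows essentially the same route as the paper: the paper's own proof simply observes that $\mu(r,\Phi(t))\sim M(\Phi_\infty)r^\alpha$ as $r,t\to\infty$ (using $\Phi(t)\to\Phi_\infty$ and the continuity of $M$) and then invokes Theorem~\ref{theorem_power_asymptotics} with $A=M(\Phi_\infty)$ as a black box, whereas you have inlined that theorem's proof (Lamperti transform, reduction to the bounded-drift theorem, inversion of $f$). The joint-limit issue you flag --- that hypothesis~(1) gives the equivalence only pointwise in $\varphi$ and some local uniformity is implicitly needed --- is real, and the paper glosses over it even more briefly than you do.
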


\begin{proof}
Notice that $\mu(r, \Phi(t)) \sim M(\Phi_\infty)r^\alpha, \ r, t \to \infty$. Applying Theorem \ref{theorem_power_asymptotics} to the process $R$, we obtain the statement of the theorem. 
\end{proof}

\begin{example}
Consider the following $n$-dimensional ($n \geq 2$) SDE:
$$\d X(t) = |X(t)|^{\alpha - 1}X(t)\d t + \d W(t), \ X(0) = x_0 \neq 0,$$
where $-1 < \alpha < 1$, $W$ is an $n$-dimensional Wiener process.

If for $x \in \R^n$ and $-1 < \alpha < 1$ we denote $x^\alpha := |x|^{\alpha - 1}x$, then the previous SDE can be written in the following form:
$$\d X(t) = X^\alpha(t)\d t + \d W(t).$$

Using this article's results, let's prove that almost surely:
\begin{itemize}
\item $\forall t \geq 0 \ X(t) \neq 0$;
\item $|X(t)| \to \infty, \ t \to \infty$;
\item $\exists \lim_{t \to \infty} \frac{X(t)}{|X(t)|}$.
\end{itemize}
\end{example}
\begin{proof}
One can obtain the SDE for the radius $R = |X|$ of the process $X$ using the general formula given in Section 1:
$$\d R(t) = \left(R^\alpha(t) + \frac{n - 1}{2R(t)}\right)\d t + \d W^{(1)}(t),$$
where $W^{(1)}$ is some one-dimensional Wiener process.

The scale function for this SDE is
$$s(r) = \int_1^r \frac1{u^{n - 1}} \exp \frac{2(1 - u^{\alpha + 1})}{\alpha + 1}\d u.$$
Since $|s(0)| = \infty$ and $|s(+\infty)| < \infty$, classic results imply that the process $R$ never hits zero and $R(t) \to \infty, \ t \to \infty,$ almost surely. This means that the process $X$ never hits the origin and $|X(t)| \to \infty, \ t \to \infty,$ almost surely. 

Let's find the asymptotics of the process $|X|$ using Theorem \ref{theorem_power_asymptotics} being applied to the SDE for $R = |X|$. The drift coefficient $\mu(r) = r^\alpha + \frac{n - 1}{2r} \sim r^\alpha, \ t \to \infty$ (here $A = 1, \ \alpha = \alpha$), the diffusion coefficient $\sigma(r) = 1 \leq r^0$ (here $C = 1, \ \beta = 0$) and a.s. $|X(t)| \to \infty, \ t \to \infty$. Then by Theorem \ref{theorem_power_asymptotics} almost surely
$$|X(t)| \sim ((1 - \alpha)t)^\frac1{1 - \alpha}, \ t \to \infty.$$

Let's prove that the angle $\frac X{|X|}$ stabilizes using Theorem \ref{theorem_angle_stabilization}. From the general SDE for the angle (see Section 1) one can obtain the SDE for the angle in our case:
$$\d \frac{X(t)}{|X(t)|} = -\frac{2I\tangential(X(t)) + n - 1}{2|X(t)|^3}X(t)\d t + \frac{I\tangential(X(t))}{|X(t)|}\d W(t),$$
where $I$ is a unit $n \times n$ matrix. The first condition of the theorem is satisfied. The second condition (existence of the lower asymptotics) follows from existence of the exact asymptotics (here $\gamma = \frac1{1 - \alpha}$). Check the third condition (find estimates for the coefficients):
$$\left|-\frac{2I\tangential(x) + n - 1}{2|x|^3}x\right| \leq \frac{2\sqrt{n - 1} + n - 1}{2|x|^2},$$
$$\left|\frac{I\tangential(x)}{|x|}\right| = \frac{\sqrt{n - 1}}{|x|},$$
i.e., $\delta_1 = 2, \ \delta_2 = 1$.
The third condition of the theorem is satisfied since for such $\delta_1, \delta_2$, and $\gamma = \frac1{1 - \alpha}$,
$$\begin{cases}
	\delta_1 > \frac1\gamma,\\
	\delta_2 > \frac1{2\gamma}
\end{cases} \Leftrightarrow \alpha > -1.$$
Hence, by Theorem \ref{theorem_angle_stabilization}, there exists a limit $\lim_{t \to \infty} \frac{X(t)}{|X(t)|}$ almost surely.
\end{proof}

\begin{example}
Let's perturb the drift coefficient of the SDE from the previous example:
$$\d X(t) = (X^\alpha(t) + f(X(t)))\d t + \d W(t),$$
where the function $f$ is such that:
\begin{itemize}
\item $|f\radial(x)| = o\left(\frac1r\right), \ |x| \to 0, \qquad |f\radial(x)| = o(|x|^{\alpha}), \ |x| \to \infty$;
\item $|f\tangential(x)| \leq C_2|x|^{\alpha - \varepsilon}$ for large $|x|$, where $C_2 > 0$, $\varepsilon \in (0, 1 + \alpha)$.
\end{itemize}
Check that the solution of this SDE has the same properties as one from the previous example.
\end{example}
\begin{proof}
Notice that 
$$|f\radial(x)| = o(|x|^{\alpha}), \ |x| \to \infty \qquad \Rightarrow \qquad |\langle f(x), x \rangle| = o(|x|^{1 + \alpha}), \ |x| \to \infty.$$
	
The SDE for the radius $|X|$ has the form
$$\d |X(t)| = \left(|X(t)|^\alpha + \frac{\langle f(X(t)), X(t)\rangle}{|X(t)|} + \frac{n - 1}{2|X(t)|}\right)\d t + \d W^{1}(t).$$

Use Theorem \ref{theorem_transiency} to prove that the solution does not hit zero and goes to infinity. Consider a Lyapunov function $V_0(r) := -\frac1{r^{n - 1}}$, which is increasing, $V_0(0) = 0$, and a Lyapunov function $V_\infty(r) := \frac1r$, which is decreasing, $|V_\infty(+\infty)| < \infty$.
We have:
$$LV_0(r) \leq \frac{n - 1}{r^{n - \alpha}} - \frac{n - 1}{2r^{n + 1}} + o\left(\frac1{r^{n + 1}}\right) \leq 0$$
for $r \to 0$,
$$LV_\infty(r) \leq -\frac1{r^{2 - \alpha}} - \frac{n - 3}{2r^3} + o(r^{\alpha - 2})\leq 0$$
for $r \to \infty$.
Hence,
$$\P\{X(t) \neq 0, \ t \geq 0\} = 1, \qquad \P\{|X(t)| \to \infty, \ t \to \infty\} = 1.$$

Let's find the asymptotics of $|X|$. Since 
$$\frac{\langle f(r\varphi), r\varphi \rangle}{r} \leq \frac{C_1|r\varphi|^{1 + \alpha - \varepsilon}}{r} = Cr^{\alpha - \varepsilon},$$
the drift coefficient
$$\mu(r, \varphi) = r^\alpha + \frac{\langle f(r\varphi), r\varphi \rangle}{r} + \frac{n - 1}{2r} \sim r^\alpha, \ t \to \infty$$
(here $A = 1, \ \alpha = \alpha$), similarly to the previous example, by Theorem \ref{theorem_power_asymptotics} we have almost surely
$$|X(t)| \sim ((1 - \alpha)t)^\frac1{1 - \alpha}, \ t \to \infty.$$

Let's prove that the angle $\frac X{|X|}$ stabilizes. From the general SDE for the angle one can obtain the SDE for the angle in our case:
$$\d \frac{X(t)}{|X(t)|} = \left(\frac{f\tangential(X(t))}{|X(t)|} -\frac{2I\tangential(X(t)) + n - 1}{2|X(t)|^3}X(t)\right)\d t + \frac{I\tangential(X(t))}{|X(t)|}\d W(t),$$
Like in the previous example, the first and the second conditions of Theorem \ref{theorem_angle_stabilization} are satisfied. Check the third condition (find estimates for the coefficients):
$$\left|\frac{f\tangential(x)}{|x|}-\frac{2I\tangential(x) + n - 1}{2|x|^3}x\right| \leq \frac{C_2}{|x|^{1 - \alpha + \varepsilon}} + \frac{2\sqrt{n - 1} + n - 1}{2|x|^2} \leq \frac{C_3}{|x|^{1 - \alpha + \varepsilon}}$$
for large $|x|$, where $C_3 > 0$ (since $1 - \alpha + \varepsilon < 1$),
$$\left|\frac{I\tangential(x)}{|x|}\right| = \frac{\sqrt{n - 1}}{|x|},$$
i.e., $\delta_1 = 1 - \alpha + \varepsilon, \ \delta_2 = 1$.
The third condition is satisfied, because for such $\delta_1, \delta_2$, and $\gamma = \frac1{1 - \alpha}$,
$$\begin{cases}
	\delta_1 > \frac1\gamma,\\
	\delta_2 > \frac1{2\gamma}
\end{cases} \Leftrightarrow \begin{cases}
	\varepsilon > 0,\\
	\alpha > -1.
\end{cases}$$
Hence, by Theorem \ref{theorem_angle_stabilization}, there exists the limit $\lim_{t \to \infty} \frac{X(t)}{|X(t)|}$ almost surely.

\end{proof}

\section{Appendix. Auxiliary Results}

Consider the following generalization of Grönwall's inequality (see § 1.7 in \cite{mao}).

\begin{lemma}\label{lemma_gronwall}
	Let $u\colon \R_+ \to \R_+$ be a continuous function satisfying the inequality
	$$u(t) \leq a(t) + C\int_0^t u^\beta(s) \d s,$$
	where $C > 0, \ 0 < \beta < 1$, the function $a\colon \R_+ \to \R_+$ is non-decreasing and continuous. Then
	$$u(t) \leq \tilde C\left((1 - \beta)t + a^{1 - \beta}(t)\right)^\frac1{1 - \beta}, \qquad \ \text{where} \ \tilde C := C^\frac1{1 - \beta}.$$
\end{lemma}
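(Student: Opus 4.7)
The plan is to apply the standard Bihari-type reduction of the integral inequality to a separable ODE inequality, exploiting the monotonicity of $a$ to freeze it at its endpoint. Fix $T > 0$ and define, for $t \in [0, T]$,
$$v(t) := a(T) + C \int_0^t u^\beta(s)\, ds.$$
Since $a$ is non-decreasing, $u(t) \leq a(t) + C\int_0^t u^\beta(s)\d s \leq v(t)$ on $[0, T]$. The function $v$ is continuously differentiable and non-negative, and since $s \mapsto s^\beta$ is non-decreasing on $[0, \infty)$, the bound $u \leq v$ yields the differential inequality
$$v'(t) = C u^\beta(t) \leq C v^\beta(t), \qquad t \in [0, T].$$

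The next step is separation of variables, which works cleanly because $\beta < 1$. Formally, $(v^{1-\beta})'(t) = (1-\beta) v^{-\beta}(t)\, v'(t) \leq (1-\beta) C$; the possibility $v(0) = 0$ (when $a(T) = 0$) is handled by the usual $\varepsilon$-regularization $v \mapsto v + \varepsilon$ followed by $\varepsilon \to 0^+$, the bounds surviving by continuity. Integrating from $0$ to $T$ gives
$$v^{1-\beta}(T) \leq v^{1-\beta}(0) + (1-\beta) C T = a^{1-\beta}(T) + C(1-\beta) T,$$
and therefore
$$u(T) \leq v(T) \leq \bigl(a^{1-\beta}(T) + C(1-\beta) T\bigr)^{1/(1-\beta)}.$$

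Finally, to match the stated form with $\tilde C = C^{1/(1-\beta)}$ pulled outside the bracket, factor $C$ out of the right-hand side and estimate $a^{1-\beta}/C \leq a^{1-\beta}$ (the case $C < 1$ is absorbed trivially, or, equivalently, after a preliminary rescaling $\bar u := C^{-1/(1-\beta)} u$ that reduces the hypothesis to the case $C = 1$). Since $T$ was arbitrary, the claimed bound holds for all $t \geq 0$. The only mild obstacle is the regularity issue when $v$ vanishes at $0$, addressed by the $\varepsilon$-approximation above; the remainder is elementary exponent algebra.
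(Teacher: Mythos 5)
The paper itself offers no proof of this lemma --- it is quoted from \S 1.7 of Mao's book --- so your Bihari-type argument is supplying a proof where the paper gives only a citation. The core of your argument is correct and is the standard one: freezing $a$ at $a(T)$, majorizing $u$ by the $\C^1$ function $v$, passing to $(v^{1-\beta})' \leq (1-\beta)C$ (with the $\varepsilon$-regularization to handle $v(0)=0$), and integrating yields the sharp Bihari bound $u(t) \leq \left(a^{1-\beta}(t) + C(1-\beta)t\right)^{1/(1-\beta)}$, which is all the paper actually needs (it only uses $\limsup_{t\to\infty} u(t)/t^2 < \infty$).

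The one genuine flaw is your last step. Factoring $C$ out of the bracket requires $a^{1-\beta}(t)/C \leq a^{1-\beta}(t)$, i.e.\ $C \geq 1$, and the case $C<1$ is \emph{not} absorbed trivially: for $C<1$ the bound as stated in the lemma is simply false. Take $u \equiv a \equiv 1$, which satisfies the hypothesis, and evaluate at $t=0$: the claimed conclusion reads $1 \leq C^{1/(1-\beta)} < 1$. The proposed rescaling $\bar u := C^{-1/(1-\beta)}u$ does not repair this, since it reproduces exactly the same factor $a^{1-\beta}/C$ inside the bracket. The honest fix is either to keep your intermediate bound as the conclusion, or to state the lemma with $\tilde C := (\max(C,1))^{1/(1-\beta)}$, which is legitimate because enlarging $C$ only weakens the hypothesis. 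So this is a defect in the lemma's statement that your proof exposes rather than a gap in your core reasoning, but the sentence claiming the case $C<1$ is handled trivially should be replaced by the $\max(C,1)$ correction.
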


Let $X$ be a solution of the following one-dimensional non-autonomous SDE:
\begin{equation}
	\label{general_sde} \d X(t) = a(X(t), t, \omega)\d t + b(X(t), t, \omega)\d W(t), \qquad X(0) = x_0 \in (x_1, x_2),
\end{equation}
where $a(x, t, \omega), b(x, t, \omega)$ are $\B(\R)\times[0, t]\times\F_t$-measurable. For this SDE, define a family of operators
\begin{equation}\label{operator}
	L_t[u](x) := a(x, t)u'(x) + \tfrac12 b^2(x, t)u''(x), \qquad x \in [x_1, x_2], \ t \geqslant 0,
\end{equation}
and the exit time
$$\tau := \inf \{t \geqslant 0\colon X(t) \notin (x_1, x_2)\}.$$

The following lemma allows to prove that under some conditions, an SDE (\ref{general_sde}) solution exits any interval $(x_1, x_2)$ after a finite time.

\begin{lemma}\label{generalized_lemma_u}
	Let the functions $a$ and $b$ be bounded on $[x_1, x_2]$. Suppose that there exists a non-random function $u$ such that
	$$L_t[u](x) \leqslant -1, \ x \in [x_1, x_2], \ t \geqslant 0.$$
	Then\footnote{Notation $\E_{x_0}$ and $\P_{x_0}$ emphasize that $X(0) = x_0$.} $\E_{x_0}\tau \leqslant 2\max_{x \in [x_1, x_2]}|u(x)|$. As a consequence, almost surely $\tau < \infty$.
\end{lemma}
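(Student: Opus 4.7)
The plan is the standard Dynkin-type argument for hitting-time bounds via a Lyapunov (supersolution) function. I would apply Itô's formula to $u(X(\cdot))$ stopped at $\tau \wedge T$ for an arbitrary deterministic $T > 0$, take expectations so that the stochastic integral vanishes, and then pass to the limit $T \to \infty$ by monotone convergence.

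Concretely, I would write
\[ u(X(\tau \wedge T)) = u(x_0) + \int_0^{\tau \wedge T} L_s[u](X(s))\,\d s + \int_0^{\tau \wedge T} u'(X(s))\,b(X(s), s)\,\d W(s), \]
take $\E_{x_0}$ of both sides (justification below), and invoke the hypothesis $L_s[u] \leq -1$ on $[x_1, x_2]$ together with the trivial pointwise bound $|u(X(\tau \wedge T))| \leq \max_{[x_1, x_2]} |u|$ to obtain
\[ \E_{x_0}[\tau \wedge T] \leq u(x_0) - \E_{x_0} u(X(\tau \wedge T)) \leq 2\max_{x \in [x_1, x_2]} |u(x)|. \]
Since $\tau \wedge T \uparrow \tau$ as $T \to \infty$, monotone convergence yields the stated bound on $\E_{x_0}\tau$, and finiteness of $\tau$ a.s. follows immediately from integrability.

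The one step that needs care is justifying that the stochastic integral is a genuine martingale (so its expectation vanishes) rather than merely a local martingale. This is precisely where the boundedness hypotheses are used: on the random interval $[0, \tau \wedge T]$ the process $X$ remains in the compact set $[x_1, x_2]$, so $u'(X(\cdot))$ is bounded (since $u \in \C^2([x_1, x_2])$, which is implicit in the very definition of $L_t[u]$), and $b(X(\cdot), \cdot)$ is bounded by assumption. Thus the integrand is uniformly bounded and the stopped integral is a square-integrable martingale with zero mean. Once this is in place, the remainder of the argument is bookkeeping; no delicate limiting or localization beyond $\tau \wedge T \uparrow \tau$ is required.
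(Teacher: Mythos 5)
Your proof is correct and is exactly the standard Dynkin/It\^o argument that the paper itself invokes by reference (it cites Khasminskii \S 3.7 and Gikhman--Skorokhod rather than writing the proof out): It\^o's formula on $[0,\tau\wedge T]$, expectation kills the stochastic integral because the integrand is bounded on the compact set $[x_1,x_2]$, the supersolution inequality gives $\E_{x_0}[\tau\wedge T]\leq u(x_0)-\E_{x_0}u(X(\tau\wedge T))\leq 2\max|u|$, and monotone convergence finishes. No gaps; the justification of the martingale property is precisely the point that needs care and you handled it.
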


The proof of the lemma in standard (e.g., see § 3.7 in \cite{khasminskii}).

\begin{corollary}\label{corollary_exit}
	Let the functions $a$ and $b$ be bounded on $[x_1, x_2]$ and $b > \delta > 0$ on $[x_1, x_2]$ for some $\delta > 0$. Then $\tau < \infty$ almost surely.
\end{corollary}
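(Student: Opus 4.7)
The plan is to invoke Lemma \ref{generalized_lemma_u} by constructing an explicit deterministic function $u$ satisfying $L_t[u](x) \leq -1$ on $[x_1, x_2]$ for every $t \geq 0$. A convenient ansatz, familiar from classical diffusion theory, is the exponential
$$u(x) := -C e^{\lambda x},$$
with positive constants $\lambda$ and $C$ to be chosen below.

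Let $M := \sup\{|a(x, t, \omega)| : x \in [x_1, x_2],\ t \geq 0,\ \omega \in \Omega\} < \infty$ by hypothesis. A direct computation gives
$$L_t[u](x) = -Ce^{\lambda x}\left(\lambda\, a(x,t) + \tfrac{\lambda^2}{2}b^2(x,t)\right),$$
and using the pathwise bounds $a(x,t) \geq -M$ and $b^2(x,t) \geq \delta^2$, the parenthetical factor is at least $\tfrac{\lambda^2\delta^2}{2} - \lambda M$.

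I first fix $\lambda$ large enough that $\tfrac{\lambda^2\delta^2}{2} - \lambda M \geq 1$ (any $\lambda \geq 2(M+1)/\delta^2$ works), and then set $C := e^{-\lambda x_1}$. Since $e^{\lambda x} \geq e^{\lambda x_1}$ on $[x_1, x_2]$ and the parenthetical factor is positive, this yields
$$L_t[u](x) \leq -Ce^{\lambda x_1} \cdot 1 = -1$$
uniformly in $x \in [x_1, x_2]$, $t \geq 0$, and $\omega \in \Omega$. The function $u$ is smooth and satisfies $|u(x)| \leq e^{\lambda(x_2 - x_1)}$ on $[x_1, x_2]$, so Lemma \ref{generalized_lemma_u} applies and gives $\E_{x_0}\tau \leq 2e^{\lambda(x_2 - x_1)} < \infty$, whence $\tau < \infty$ almost surely.

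The only subtlety — which I would flag as the main thing to get right — is the order in which the two constants are chosen: $\lambda$ must be picked first so that the diffusion contribution $\tfrac{\lambda^2}{2}b^2$ dominates the drift contribution $\lambda a$ uniformly on the whole rectangle, and only then may $C$ be scaled to push the resulting lower bound $Ce^{\lambda x_1}$ above $1$. Both hypotheses of the corollary enter essentially in this ordering: boundedness of $a$ is what allows $\lambda$ to dominate the drift uniformly, while the strict positivity $b \geq \delta > 0$ is what allows the quadratic-in-$\lambda$ diffusion term to win.
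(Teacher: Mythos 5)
Your proof is correct and rests on the same key lemma as the paper's argument (Lemma \ref{generalized_lemma_u}, applied to an explicit exponential test function), but the test function itself is genuinely different. The paper picks constants $A \neq 0$, $B > 0$ with $a \geqslant A$ and $0 < b \leqslant B$ and uses the decreasing \emph{convex} function $u(x) = \exp(-2Ax/B^2) - x/A$ solving $Au' + \tfrac12 B^2 u'' = -1$; this exploits a lower bound on the drift together with an upper bound on the diffusion, and in fact never uses the hypothesis $b > \delta > 0$ (moreover, that $u$ is decreasing only when $A > 0$, so the paper's explicit example implicitly assumes the drift is bounded below by a positive constant). Your $u(x) = -Ce^{\lambda x}$ is decreasing and \emph{concave}, so the roles are reversed: you need only $a \geqslant -M$ and $b$ bounded away from zero, which is precisely the nondegeneracy hypothesis the corollary advertises, and your argument works uniformly in the sign of the drift. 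The one quantitative slip: at $\lambda = 2(M+1)/\delta^2$ one gets $\tfrac{\lambda^2\delta^2}{2} - \lambda M = \tfrac{2(M+1)}{\delta^2}$, which is $\geqslant 1$ only when $\delta^2 \leqslant 2(M+1)$; taking $\lambda \geqslant \max\bigl(1,\, 2(M+1)/\delta^2\bigr)$ repairs this, since then $\tfrac{\lambda^2\delta^2}{2} - \lambda M \geqslant \lambda \geqslant 1$. With that trivial fix, the ordering of choices you emphasize ($\lambda$ first, then $C$) is exactly right, and the conclusion $\E_{x_0}\tau \leqslant 2e^{\lambda(x_2 - x_1)} < \infty$ follows from Lemma \ref{generalized_lemma_u} as you state.
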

\begin{proof}
	From the conditions of the corollary, it follows that there exist constants $A \neq 0$ and $B > 0$ such that $a(\cdot, \cdot) \geqslant A, \ 0 < b(\cdot, \cdot) \leqslant B$. Let the function $u$ is such that $u' < 0$, $u'' > 0$. Then
	$$Au'(x) + \frac12 B^2u''(x) = -1 \Rightarrow L_\varphi[u](x) \leqslant -1.$$
	For example,
	$$u(x) := \exp\left(-\frac{2A}{B^2}x\right) - \frac{x}{A}, \ x \in [x_1, x_2].$$
	Hence, $\tau < \infty$ a.s. by Lemma \ref{generalized_lemma_u}.
\end{proof}

The next lemma allows to estimate probabilities of exiting through the left ot the right end of the interval $[x_1, x_2]$.

\begin{lemma}\label{generalized_lemma_s}
	Let the conditions of Lemma \ref{generalized_lemma_u} hold. Besides this, suppose that there exists a decreasing function $V$ such that
	$$L_t[V](x) \leqslant 0,\; x \in [x_1, x_2].$$
	Then
	$$\P_{x_0} \{X(\tau) = x_1\} \leqslant \frac{V(x_0) - V(x_2)}{V(x_1) - V(x_2)}, \qquad \P_{x_0} \{X(\tau) = x_2\} \geqslant \frac{V(x_1) - V(x_0)}{V(x_1) - V(x_2)}.$$
\end{lemma}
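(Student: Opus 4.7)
The plan is to apply Itô's formula to the function $V$ composed with the stopped process $X(t \wedge \tau)$, take expectations to kill the martingale part and exploit $L_t[V] \leq 0$ to get a supermartingale-type inequality, and then pass to the limit $t \to \infty$ using Lemma \ref{generalized_lemma_u}.

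First I would apply Itô's formula (which implicitly requires $V \in \C^2$, as already needed for $L_t[V]$ to be defined) to obtain
$$V(X(t \wedge \tau)) = V(x_0) + \int_0^{t \wedge \tau} L_s[V](X(s))\,\d s + \int_0^{t \wedge \tau} b(X(s),s) V'(X(s))\,\d W(s).$$
On the stochastic interval $[0, \tau]$ the process $X$ stays in the compact set $[x_1, x_2]$, and there $V'$ is bounded (being continuous on a compact) and $b$ is bounded by hypothesis of Lemma~\ref{generalized_lemma_u}; hence the integrand of the stochastic integral is bounded, so the stopped stochastic integral is a genuine martingale with zero mean. Using $L_s[V] \leq 0$ on $[x_1,x_2]$, taking expectations gives
$$\E_{x_0} V(X(t \wedge \tau)) \leq V(x_0) \quad \text{for every } t \geq 0.$$

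Next I would let $t \to \infty$. By Lemma~\ref{generalized_lemma_u} we have $\tau < \infty$ almost surely, so $X(t \wedge \tau) \to X(\tau)$ a.s.; since $V$ is continuous and bounded on $[x_1, x_2]$, bounded convergence yields
$$\E_{x_0} V(X(\tau)) \leq V(x_0).$$
Because $X$ has continuous paths and $x_0 \in (x_1, x_2)$, we have $X(\tau) \in \{x_1, x_2\}$. Setting $p := \P_{x_0}\{X(\tau) = x_1\}$, so $\P_{x_0}\{X(\tau) = x_2\} = 1 - p$, the inequality becomes
$$p\, V(x_1) + (1 - p) V(x_2) \leq V(x_0).$$

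Finally, since $V$ is decreasing we have $V(x_1) - V(x_2) > 0$, so solving for $p$ gives
$$p \leq \frac{V(x_0) - V(x_2)}{V(x_1) - V(x_2)},$$
which is the first estimate, and $1 - p \geq \frac{V(x_1) - V(x_0)}{V(x_1) - V(x_2)}$ is the second. The only real subtlety is justifying that the stopped stochastic integral has mean zero and that we can pass to the limit $t \to \infty$; both reduce to the confinement of $X$ to the compact interval $[x_1, x_2]$ on $[0, \tau]$ together with $\tau < \infty$ a.s. supplied by the previous lemma, so no delicate uniform integrability argument is needed.
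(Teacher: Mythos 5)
Your proof is correct and follows exactly the route the paper intends: the paper does not spell out the argument but states that the proof is standard and ``exploits Itô's lemma on the interval $[0,\tau]$,'' which is precisely your supermartingale argument with the stopped process, bounded convergence via $\tau<\infty$ a.s.\ from Lemma~\ref{generalized_lemma_u}, and the final algebraic step using that $V$ is decreasing.
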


Proofs of these lemmas are standard and exploit Itô's lemma on the interval $[0, \tau]$ (for the proof ideas, see § 16 of Part I in \cite{gichman}).

\end{document}